\DeclareMathOperator{\RE}{Re}
\DeclareMathOperator{\num1}{1}
\DeclareMathOperator{\id}{id}
 \DeclareMathOperator{\Ad}{Ad}
 \DeclareMathOperator{\sign}{sign}
\DeclareMathOperator{\TRR}{Tr} \DeclareMathOperator{\MOD}{mod}
 \DeclareMathOperator{\spec}{spec}
\DeclareMathOperator{\II}{i}
\newtheorem{theorem}{Theorem}
\newenvironment{teorem}[2][Theorem]{\begin{trivlist}
\item[\hskip \labelsep {\bfseries #1}\hskip \labelsep {\bfseries #2}]}{\end{trivlist}}
\newenvironment{cor}[2][Corollary]{\begin{trivlist}
\item[\hskip \labelsep {\bfseries #1}\hskip \labelsep {\bfseries #2}]}{\end{trivlist}}
\newenvironment{remm}[2][Remark]{\begin{trivlist}
\item[\hskip \labelsep {\bfseries #1}\hskip \labelsep {\bfseries #2}]}{\end{trivlist}}
\numberwithin{equation}{section}
\numberwithin{theorem}{section}
\newcommand{\set}[1]{\left\{#1\right\}}
\newcommand{\abs}[1]{\left\vert#1\right\vert}
\newcommand{\br}[1]{\left(#1\right)}
\newcommand{\SqBr}[1]{\left[#1\right]}
\begin{document}

\title[Order of zeta functions]{Order of zeta functions for compact odd-dimensional locally symmetric spaces}
\author{Muharem Avdispahi\'c and D\v zenan Gu\v si\'c}

\address{University of Sarajevo, Department of Mathematics, Zmaja od Bosne
35, 71000 Sarajevo, Bosnia and Herzegovina}
\email{\textbf{mavdispa@pmf.unsa.ba}}

\address{University of Sarajevo, Department of Mathematics, Zmaja od Bosne
35, 71000 Sarajevo, Bosnia and Herzegovina}
\email{\textbf{dzenang@pmf.unsa.ba}}

\keywords{Selberg zeta function, Ruelle zeta function, locally
symmetric spaces, dynamical system}

\subjclass[2010]{11M36, 37C30}

\maketitle

\begin{abstract}
We prove that the meromorphic continuations of the Ruelle and Selberg zeta functions considered by Bunke and Olbrich are of finite order not larger than the dimension of the underlaying compact, odd-dimensional, locally symmetric space.
\end{abstract}

\section{Introduction}

Let $Y=\Gamma\backslash G/K=\Gamma\backslash X$ be a compact, $n-$ dimensional ($n$ odd), locally
symmetric Riemannian manifold with negative sectional curvature, where $G$ is
a connected semisimple Lie group of real rank one, $K$ is a
maximal compact subgroup of $G$ and $\Gamma$ is a discrete
co-compact torsion free subgroup of $G$.

The covering manifold $X$ is known to be a real hyperbolic space $H\mathbb{R}^{n}$.

We require $G$ to be linear in order to have complexification
available.

In \cite{Bunke}, authors derived the properties of zeta functions canonically associated with the geodesic flow of $Y$. For even $n$, we proved that such functions are quotients of some entire functions whose order is not larger than $n$ (see, \cite{AG}). In this paper, which can be considered as a natural continuation of our previous study, we prove analogous results for odd $n$.

\section{Preliminaries}

Denote by $\mathfrak{g}$, $\mathfrak{k}$ the Lie algebras of $G$, $K$ and let $\mathfrak{g}=\mathfrak{k}\oplus\mathfrak{p}$ be the Cartan decomposition with Cartan involution $\theta$. Fix a one-dimensional subspace $\mathfrak{a}\subset\mathfrak{p}$ and let $M$ be the centralizer of $\mathfrak{a}$ in $K$ with Lie algebra $\mathfrak{m}$.

We denote by $(.,.)$ the $\Ad{\br{G}}-$invariant scalar product on $\mathfrak{g}$ which is normalized to restrict to the metric on $\mathfrak{p}$.

The unit sphere bundle $SX$ of $X$ may be represented as the homogeneous space $G/M$. Therefore, $SY=\Gamma\backslash G/M$.

Let $G=KAN$ and $\mathfrak{g}=\mathfrak{k}\oplus\mathfrak{a}\oplus\mathfrak{n}$ be the Iwasawa decompositions of $G$ and $\mathfrak{g}$, let $\Phi^{+}\br{\mathfrak{g},\mathfrak{a}}$ be the positive root system and $W$ its Weyl group. Put

\[\rho=\frac{1}{2}\sum\limits_{\alpha\in\Phi^{+}\br{\mathfrak{g},\mathfrak{a}}}m_{\alpha}\alpha,\]
\newline
where $m_{\alpha}$ is the dimension of the root space corresponding to $\alpha$.

Let $A^{+}=\exp\br{\mathfrak{a}^{+}}\subset A$, where $\mathfrak{a}^{+}$ is the positive Weyl chamber, i.e., the half line in $\mathfrak{a}$ on which the positive roots take positive values.

We choose a maximal abelian subalgebra $\mathfrak{t}$ of $\mathfrak{m}$. Now, $\mathfrak{h}=\mathfrak{t}_{\mathbb{C}}\oplus\mathfrak{a}_{\mathbb{C}}$ is a Cartan subalgebra of $\mathfrak{g}_{\mathbb{C}}$. We choose a positive root system $\Phi^{+}\br{\mathfrak{g}_{\mathbb{C}},\mathfrak{h}}$ having the property that
$\alpha_{|\mathfrak{a}}\in\Phi^{+}\br{\mathfrak{g},\mathfrak{a}}$
implies
$\alpha\in\Phi^{+}\br{\mathfrak{g}_{\mathbb{C}},\mathfrak{h}}$ for $\alpha\in\Phi\br{\mathfrak{g}_{\mathbb{C}},\mathfrak{h}}$.
Let
\[\delta=\frac{1}{2}\sum\limits_{\alpha\in\Phi^{+}\br{\mathfrak{g}_{\mathbb{C}},\mathfrak{h}}}\alpha\]
and set $\rho_{\mathfrak{m}}=\delta-\rho$.

The inclusion $M\subset K$ induces a restriction map $i^{*}:R\br{K}\rightarrow R\br{M}$, where $R\br{K}$, $R\br{M}$ are the representation rings with integer coefficients of the groups $K$, $M$, respectively.

Since $X=H\mathbb{R}^{n}$, we have $K=Spin\br{n}$, $M=Spin\br{n-1}$ or $K=SO\br{n}$, $M=SO\br{n-1}$ for $n\geq 3$.

Following \cite[p.~27]{Bunke}, we distinguish between two cases.

\textit{Case} (\textit{a}): $\sigma\in\hat{M}$ is invariant under the action of the Weyl group $W$.

Choose $\gamma\in R\br{K}$ such that $i^{*}\br{\gamma}=\sigma$. Note that $\gamma$ is uniquely determined by this condition. More explicitly, let

\begin{equation}\label{first}
\gamma=\sum a_{i}\gamma_{i},
\end{equation}
\newline
where $a_{i}\in\mathbb{Z}$ and $\gamma_{i}\in\hat{K}$. Form

\[V_{\gamma}^{\pm}=\sum\limits_{\sign{\br{a_{i}}}=\pm1}\sum\limits_{m=1}^{\abs{a_{i}}}V_{\gamma_{i}},\]
\newline
where $V_{\gamma_{i}}$ is the representation space of
$\gamma_{i}$. Then $V\br{\gamma}^{\pm}$ is given by $V\br{\gamma}^{\pm}=G\times_{K}V_{\gamma}^{\pm}$. Now, $V\br{\gamma}=V\br{\gamma}^{+}\oplus V\br{\gamma}^{-}$ is a $\mathbb{Z}_{2}$-graded homogeneous vector bundle on $X$ associated to $\gamma$ and

\[V_{Y,\chi}\br{\gamma}=\Gamma\backslash\br{V_{\chi}\otimes
V\br{\gamma}}\]
\newline
is a $\mathbb{Z}_{2}$-graded vector bundle on $Y$, where
$\br{\chi,V_{\chi}}$ is a finite-dimensional unitary
representation of $\Gamma$.

Using the scalar product $(.,.)$, we define the shift constant

\[c\br{\sigma}=\abs{\rho}^{2}+\abs{\rho_{\mathfrak{m}}}^{2}-\abs{\mu_{\sigma}+\rho_{\mathfrak{m}}}^{2},\]
\newline
where $\mu_{\sigma}\in$i$\mathfrak{t}^{*}$ is the highest weight of
$\sigma$ (see, \cite[pp. 19--20]{Bunke}). The aforementioned scalar product also fixes the Casimir operator $\Omega$ acting on sections of the bundle $V\br{\gamma}$. Finally, we define the operator (see, \cite[p.~28]{Bunke})

\[A_{Y,\chi}\br{\gamma,\sigma}^{2}=-\Omega -c\br{\sigma}: C^{\infty}\br{Y,V_{Y,\chi}\br{\gamma}}\rightarrow C^{\infty}\br{Y,V_{Y,\chi}\br{\gamma}}.\]
\newline
\textit{Case} (\textit{b}): $\sigma\in\hat{M}$ is not invariant under the action of the Weyl group.

Choose $\gamma^{'}\in\hat{Spin\br{n}}$ satisfying $s\otimes\gamma^{'}=\gamma^{+}\oplus\gamma^{-}$, where $s$ is the spin representation of $Spin\br{n}$ and $\gamma^{\pm}$ are representations of $K$ such that for the nontrivial element $w\in W$ we have

\[\sigma-w\sigma=\sign\br{\nu_{k}}\br{s^{+}-s^{-}}i^{*}\br{\gamma^{'}},\]

\[\sigma+w\sigma=i^{*}\br{\gamma^{+}-\gamma^{-}},\]
\newline
where $\nu_{k}$ is the last coordinate of the highest weight of $\sigma$ and $s^{\pm}$ are the half-spin representations of $Spin\br{n-1}$.

Notice that $\gamma^{'}$ is unique.

Set $\gamma=\gamma^{+}-\gamma^{-}\in R\br{K}$ and $\gamma^{s}=\gamma^{+}+\gamma^{-}\in R\br{K}$. We define the bundles
\[V\br{\gamma},\quad V_{Y,\chi}\br{\gamma},\quad V\br{\gamma^{s}},\quad V_{Y,\chi}\br{\gamma^{s}}\]
\newline
and the operators
\[A_{Y,\chi}\br{\gamma,\sigma},\quad A_{Y,\chi}\br{\gamma^{s},\sigma}\]
\newline
in the same way as in the \textit{case} (\textit{a}).

Since $V_{Y,\chi}\br{\gamma^{s}}$ is a Clifford bundle, it carries a Dirac operator $D_{Y,\chi}\br{\sigma}$. We make this Dirac operator unique reasoning exactly as in \cite[p.~29]{Bunke}.

Being selfadjoint, the Dirac operator $D_{Y,\chi}\br{\sigma}$ satisfies

\[A_{Y,\chi}\br{\gamma^{s},\sigma}=\abs{D_{Y,\chi}\br{\sigma}}.\]
\newline
Let $E_{A}\br{.}$ be the family of spectral projections of
a normal operator $A$. For $s\in\mathbb{C}$, we define
\[m_{\chi}\br{s,\gamma,\sigma}=\TRR{E_{A_{Y,\chi}\br{\gamma,\sigma}}}\br{\set{s}},\]

\[m_{\chi}^{s}\br{s,\sigma}=\TRR{\br{E_{D_{Y,\chi}\br{\sigma}}\br{\set{s}}-E_{D_{Y,\chi}\br{\sigma}}\br{\set{-s}}}}.\]
\newline
Note that the multiplicities $m_{\chi}\br{s,\gamma,\sigma}$, $m_{\chi}^{s}\br{s,\sigma}$ do not depend on the choice of the representative (\ref{first}) of $\gamma$.

Define the root vector $H_{\alpha}\in\mathfrak{a}$ for
$\alpha\in\Phi^{+}\br{\mathfrak{g},\mathfrak{a}}$ by

\[\lambda\br{H_{\alpha}}=\frac{\br{\lambda,\alpha}}{\br{\alpha,\alpha}},\quad\forall\lambda\in\mathfrak{a}^{*}.\]
\newline
Requiring that
\[e^{2\pi\II\varepsilon_{\alpha}\br{\sigma}}=\sigma\br{e^{2\pi
\II H_{\alpha}}}\in\set{\pm 1}\]
\newline
for $\alpha\in\Phi^{+}\br{\mathfrak{g},\mathfrak{a}}$, we define
$\varepsilon_{\alpha}\br{\sigma}\in\set{0,\frac{1}{2}}$.

Recall that $\Phi^{+}\br{\mathfrak{g},\mathfrak{a}}=\set{\alpha}$ or
$\Phi^{+}\br{\mathfrak{g},\mathfrak{a}}=\set{\frac{\alpha}{2},\alpha}$, where $\alpha$ is the long root. For such $\alpha$ and $\sigma\in\hat{M}$, we introduce $\epsilon_{\sigma}\in\set{0,\frac{1}{2}}$ by

\[\epsilon_{\sigma}\equiv\frac{\abs{\rho}}{T}+\varepsilon_{\alpha}\br{\sigma}\,\MOD{\mathbb{Z}},\]
\newline
the lattice $L\br{\sigma}\subset\mathbb{R}\cong\mathfrak{a}^{*}$ by
$L\br{\sigma}=T\br{\epsilon_{\sigma}+\mathbb{Z}}$ and $P_{\sigma}\br{\lambda}$ by

\[P_{\sigma}\br{\lambda}=\prod\limits_{\beta\in\Phi^{+}\br{\mathfrak{g}_{\mathbb{C}},\mathfrak{h}}}
\frac{\br{\lambda
+\mu_{\sigma}+\rho_{\mathfrak{m}},\beta}}{\br{\delta,\beta}},\quad\lambda\in\mathfrak{a}_{\mathbb{C}}^{*}\cong\mathbb{C},\]
\newline
where $T=\abs{\alpha}$.

\section{Zeta functions}

We denote by $C\Gamma$ the set of conjugacy classes of $\Gamma$.

$\Gamma$ being co-compact and torsion free, each $\gamma\in\Gamma$, $\gamma\neq\num1$ is hyperbolic.

It is well known that for every hyperbolic $g\in G$ there is an Iwasawa decomposition $G=NAK$ such that $g=am\in A^{+}M$.

Let $\varphi$ be a canonical dynamical system on $SY$. It is given  by

\[\varphi: \mathbb{R}\times SY\ni\br{t,\Gamma gM}\rightarrow\Gamma g\exp\br{-tH}M\in SY,\]
\newline
where $H\in\mathfrak{a}^{+}$ is the unit vector. Define

\[\varphi_{\chi,\sigma}: \mathbb{R}\times V_{\chi}\br{\sigma}\ni\br{t,\SqBr{g,v\otimes w}}\rightarrow
\SqBr{g\exp\br{-tH},v\otimes w}\in V_{\chi}\br{\sigma},\]
\newline
where $V_{\chi}\br{\sigma}=\Gamma\backslash\br{G\times_{M}V_{\sigma}\otimes
V_{\chi}}$ denotes the vector bundle associated to finite-dimensional unitary representations $\br{\sigma,V_{\sigma}}$, $\br{\chi,V_{\chi}}$ of $M$,
$\Gamma$, respectively. Then, $\varphi_{\chi,\sigma}$ is a lift of $\varphi$ to $V_{\chi}\br{\sigma}$.

As easily seen, the free homotopy classes of closed paths on $Y$ are in a natural one-to-one correspondence with the set $C\Gamma$.

If $\SqBr{1}\neq\SqBr{g}\in C\Gamma$, then the corresponding closed orbit is

\[c=\set{\Gamma g^{'}\exp\br{-tH}M\,\,|\,\, t\in\mathbb{R}},\]
\newline
where $g^{'}$ is choosen so that $\br{g^{'}}^{-1}gg^{'}=ma\in MA^{+}$, while its length $l\br{c}$ is given by $\abs{\log\br{a}}$.

The lift of $c$ to $V_{\chi}\br{\sigma}$ induces a linear transformation $\mu_{\chi,\sigma}\br{c}$ on the fibre over $\Gamma g^{'}M$, the monodromy of $c$ (see, \cite[p.~96]{Bunke}).

For $s\in\mathbb{C}$, $\RE{\br{s}}>2\rho$, the Ruelle zeta function is defined by

\[Z_{R,\chi}\br{s,\sigma}=\prod\limits_{c\,\,\textrm{prime}}\det\br{1-\mu_{\chi,\sigma}\br{c}e^{-sl\br{c}}}^{\br{-1}^{n-1}}.\]
\newline
Here, a closed orbit $c$ through $y\in SY$ corresponding to $\SqBr{g}\in C\Gamma$ is called prime if $\abs{\log\br{a}}$ is the smallest time with $\varphi\br{\abs{\log\br{a}},y}=y$.

We point out that $Z_{R,\chi}\br{s,\sigma}$ is associated to the flow $\varphi_{\chi,\sigma}$.

The geodesic flow $\varphi$ satisfies the Anosov property, i.e., there is a $d\varphi$-invariant splitting
\[TSY=T^{s}SY\oplus T^{0}SY\oplus T^{u}SY,\]
\newline
where $T^{0}SY$ consists of vectors tangential to the orbits, while the vectors in $T^{s}SY$ ($T^{u}SY$) shrink (grow) exponentially with respect to the metric as $t\rightarrow\infty$, when transported with $d\varphi$. In our case, the splitting is given by

\[TSY\cong\Gamma\backslash G\times_{M}\br{\bar{\mathfrak{n}}\oplus\mathfrak{a}\oplus\mathfrak{n}},\]
\newline
where $\bar{\mathfrak{n}}=\theta\mathfrak{n}$. Now, the monodromy $P_{c}$ in $TSY$ of a closed orbit $c$, decomposes according to the splitting

\[P_{c}=P_{c}^{s}\oplus\id\oplus P_{c}^{u}.\]
\newline
For $s\in\mathbb{C}$, $\RE{\br{s}}>\rho$, the Selberg zeta function is the infinite product

\[Z_{S,\chi}\br{s,\sigma}=\prod\limits_{c\,\,\textrm{prime}}\prod\limits_{k=0}^{\infty}\det\br{1-\mu_{\chi,\sigma}\br{c}\otimes S^{k}\br{P_{c}^{s}}e^{-\br{s+\rho}l\br{c}}},\]
\newline
where $S^{k}$ is the $k$-th symmetric power of an endomorphism.

In \textit{case} (\textit{b}), the authors \cite[pp. 97--98]{Bunke} also defined

\[S_{\chi}\br{s,\sigma}=Z_{S,\chi}\br{s,\sigma}Z_{S,\chi}\br{s,w\sigma}\]
\newline
and the super zeta function

\[S_{\chi}^{s}\br{s,\sigma}=\frac{Z_{S,\chi}\br{s,\sigma}}{Z_{S,\chi}\br{s,w\sigma}}\]
\newline
for the non-trivial element $w\in W$.

It is known \cite{Fried}, that the Ruelle zeta function can be expressed in terms of Selberg zeta functions.

We have
\begin{equation}\label{second}
Z_{R,\chi}\br{s,\sigma}=\prod\limits_{p=0}^{n-1}\prod\limits_{\br{\tau,\lambda}\in I_{p}}Z_{S,\chi}\br{s+\rho-\lambda,\tau\otimes\sigma}^{\br{-1}^{p}},
\end{equation}
\newline
where
\[I_{p}=\set{\br{\tau,\lambda}\mid\tau\in\hat{M},\lambda\in\mathbb{R}}\]
\newline
are such sets that $\Lambda^{p}\mathfrak{n}_{\mathbb{C}}$ decomposes with respect to $MA$ as

\[\Lambda^{p}\mathfrak{n}_{\mathbb{C}}=\sum\limits_{\br{\tau,\lambda}\in I_{p}}V_{\tau}\otimes\mathbb{C}_{\lambda}.\]
\newline
Here, $V_{\tau}$ is the space of the representation $\tau$ and $\mathbb{C}_{\lambda}$, $\lambda\in\mathbb{C}$ is the one-dimensional representation of $A$ given by  $A\ni a\rightarrow a^{\lambda}$.

The following theorem holds true (see, \cite[p.~113, Th. 3.15]{Bunke}).
\newline

\begin{teorem}{A.}\label{t3.15}
\textit{Zeta functions $Z_{S,\chi}\br{s,\sigma}$, $S_{\chi}\br{s,\sigma}$ and $S_{\chi}^{s}\br{s,\sigma}$ have
meromorphic continuation to all of $\mathbb{C}$. The singularities of $Z_{S,\chi}\br{s,\sigma}$} (\textit{case} (\textit{a})) \textit{and of $S_{\chi}\br{s,\sigma}$} (\textit{case} (\textit{b})) \textit{are}\\
\begin{itemize}
    \item[] at $\pm\II$$s$ of order $m_{\chi}\br{s,\gamma,\sigma}$ if
    $s\neq 0$ is an eigenvalue of
    $A_{Y,\chi}\br{\gamma,\sigma}$,\\
    \item[] at $s=0$ of order $2m_{\chi}\br{0,\gamma,\sigma}$ if
    $0$ is an eigenvalue of $A_{Y,\chi}\br{\gamma,\sigma}$.\\
\end{itemize}
\textit{In} \textit{case} (\textit{b}), \textit{the singularities of $S_{\chi}^{s}\br{s,\sigma}$ are at $\II$$s$ and have the order $m_{\chi}^{s}\br{s,\sigma}$ if $s\in\mathbb{R}$ is an eigenvalue of $D_{Y,\chi}\br{\sigma}$. Furthermore, in} \textit{case} (\textit{b}), \textit{the zeta function $Z_{S,\chi}\br{s,\sigma}$ has singularities at $\II$$s$, $\pm s\in\spec\br{A_{Y,\chi}\br{\gamma^{s},\sigma}}$ of order\\ $\frac{1}{2}\br{m_{\chi}\br{\abs{s},\gamma,\sigma}+m_{\chi}^{s}\br{s,\sigma}}$ if $s\neq 0$ and $m_{\chi}\br{0,\gamma,\sigma}$ if $s=0$.}
\end{teorem}\

\section{Main result}

The main result of this paper is the following theorem
\newline

\begin{theorem}\label{new}
If $f\br{s}\in\set{Z_{S,\chi}\br{s,\sigma}, S_{\chi}\br{s,\sigma}, S_{\chi}^{s}\br{s,\sigma}}$, then there exist entire functions $Z_{1}\br{s}$, $Z_{2}\br{s}$ of order at most $n$ such that

\[f(s)=\frac{Z_{1}\br{s}}{Z_{2}\br{s}},\]
\newline
where the zeros of $Z_{1}\br{s}$ correspond to the zeros of $f\br{s}$ and the zeros of $Z_{2}\br{s}$ correspond to the poles of $f\br{s}$. The orders of the zeros of $Z_{1}\br{s}$ resp. $Z_{2}\br{s}$ equal the orders of the corresponding zeros. resp. poles of $f\br{s}$.
\end{theorem}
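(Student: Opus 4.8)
The plan is to extract the complete divisor of $f\br{s}$ from Theorem A, to bound the number of its zeros and poles by a Weyl-type estimate, and then to apply the Hadamard factorization theorem. First I would read off from Theorem A the exact location and order of every zero and every pole of $f\br{s}$. In each of the three cases these singularities sit at the points $\pm\II s$, where $s$ runs through the spectrum of $A_{Y,\chi}\br{\gamma,\sigma}$ (respectively $A_{Y,\chi}\br{\gamma^{s},\sigma}$ and $D_{Y,\chi}\br{\sigma}$ in \textit{case} (\textit{b})), with orders given by the multiplicities $m_{\chi}\br{s,\gamma,\sigma}$ and $m_{\chi}^{s}\br{s,\sigma}$.

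The operator $A_{Y,\chi}\br{\gamma,\sigma}$ is defined through $A_{Y,\chi}\br{\gamma,\sigma}^{2}=-\Omega-c\br{\sigma}$, a second-order generalized Laplacian with positive-definite principal symbol acting on sections of the vector bundle $V_{Y,\chi}\br{\gamma}$ over the compact $n$-dimensional manifold $Y$, while $D_{Y,\chi}\br{\sigma}$ is a first-order elliptic (Dirac) operator on a bundle over $Y$. Weyl's law for $-\Omega-c\br{\sigma}$ gives an eigenvalue counting function of growth $\lambda^{n/2}$, so the number of $s$ in the spectrum of $A_{Y,\chi}\br{\gamma,\sigma}$ with $\abs{s}\leq T$ is $O\br{T^{n}}$, and the same holds for $D_{Y,\chi}\br{\sigma}$ (its eigenvalues may be negative, but $\abs{s}\leq T$ still yields $O\br{T^{n}}$). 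Consequently the number of zeros and poles of $f\br{s}$ in the disk $\abs{s}\leq r$ is $O\br{r^{n}}$, so the divisor of $f$ has convergence exponent at most $n$.

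Next I would form the canonical (Hadamard) product $Z_{1}\br{s}$ over the zeros of $f$ and the canonical product $Z_{2}\br{s}$ over the poles of $f$, each taken with the multiplicities prescribed by Theorem A and of genus at most $n$. By the preceding bound on the convergence exponent, Borel's theorem shows that each product converges and defines an entire function of order at most $n$ whose zeros are exactly the prescribed zeros, respectively poles, of $f$ with the correct orders; this already matches the last assertion of the theorem. By construction the meromorphic function $f\br{s}\,Z_{2}\br{s}/Z_{1}\br{s}$ has neither zeros nor poles, hence equals $e^{g\br{s}}$ for some entire $g$.

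The main obstacle is to prove that $g$ is a polynomial of degree at most $n$, since only then may $e^{g}$ be absorbed into $Z_{1}$ without spoiling the order bound. For this I would estimate the growth of $f$ directly: for $\RE\br{s}$ large the defining Euler product converges and $\log f\br{s}$ stays bounded, controlling $f$ in a right half-plane, while the symmetry $s\mapsto -s$ of the divisor in Theorem A reflects a functional equation for $f$ whose identity contribution is governed by $P_{\sigma}\br{\lambda}$. Since $P_{\sigma}$, as a polynomial in $\lambda$, has degree at most $n-1$ (only the positive roots of $\br{\mathfrak{g}_{\mathbb{C}},\mathfrak{h}}$ not vanishing on $\mathfrak{a}$ contribute, and these account for $\dim\mathfrak{n}=n-1$), the corresponding functional-equation factor is the exponential of a polynomial of degree at most $n$. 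Combining the right half-plane bound, this functional equation, and the $O\br{r^{n}}$ count of singularities through a Phragm\'en--Lindel\"of / Borel--Carath\'eodory argument yields $\log\abs{f\br{s}}=O\br{\abs{s}^{n}}$ away from the singularities, forcing $g$ to be a polynomial of degree at most $n$. Finally, in \textit{case} (\textit{b}) the identities $S_{\chi}\br{s,\sigma}=Z_{S,\chi}\br{s,\sigma}Z_{S,\chi}\br{s,w\sigma}$ and $S_{\chi}^{s}\br{s,\sigma}=Z_{S,\chi}\br{s,\sigma}/Z_{S,\chi}\br{s,w\sigma}$ exhibit these functions as products and quotients of functions already treated, so the order-$n$ quotient representation for them follows at once, their divisors again being supplied by Theorem A.
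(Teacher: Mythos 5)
Your first three steps coincide with the paper's own proof: the divisor of $f$ is read off from Theorem A, the Weyl law for the second--order elliptic operator $D_{Y,\chi}\br{\sigma}^{2}=A_{Y,\chi}\br{\gamma^{s},\sigma}^{2}$ gives a counting function $N\br{r}\sim Cr^{n}$, hence convergence exponent and genus at most $n$ for the zero set and the pole set; the two canonical products are formed, and the zero--free, pole--free quotient is written as $e^{g\br{s}}$ with $g$ entire. (The paper additionally isolates a possible singularity at $s=0$ by a factor $s^{m_{0}}$ with $m_{0}=2m_{\chi}\br{0,\gamma,\sigma}$, which you should also do, and in \textit{case} (\textit{b}) it treats $S_{\chi}$ and $S_{\chi}^{s}$ first and deduces $Z_{S,\chi}=\sqrt{S_{\chi}S_{\chi}^{s}}$ --- the reverse of your reduction; either direction is acceptable.)

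The divergence, and the genuine gap, is in how you bound the degree of $g$. You propose to establish $\log\abs{f\br{s}}=O\br{\abs{s}^{n}}$ by combining the convergent Euler product in a right half--plane, the functional equation whose exponential factor is governed by the degree--$\br{n-1}$ polynomial $P_{\sigma}$, and a Phragm\'en--Lindel\"of argument across the remaining vertical strip. But Phragm\'en--Lindel\"of on the strip $\abs{\RE\br{s}}\leq\rho$ requires an a priori growth restriction on the function being estimated, and for $e^{g\br{s}}=f\br{s}W_{2}\br{s}/W_{1}\br{s}$ with $g$ a completely unknown entire function no such restriction is available: excluding, say, doubly exponential growth of $e^{g}$ inside the strip is essentially the assertion you are trying to prove, so the step is circular. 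The Euler product, the $s\mapsto -s$ functional equation and the $O\br{r^{n}}$ singularity count do not by themselves control $f$ in the critical strip. The paper closes this step by a different device: writing $g\br{s}=\log f\br{s}+\log W_{2}\br{s}-\log W_{1}\br{s}-m_{0}\log s$ and differentiating $n+1$ times, each term becomes a convergent series --- the logarithmic derivative of $f$ is an absolutely convergent Dirichlet series for $\RE\br{s}\gg 0$, while the $\br{n+1}$--st derivative of $\log W_{i}$ is $-n!\sum_{z\in R_{i}}\br{z-s}^{-\br{n+1}}$, absolutely convergent because the genus is at most $n$ --- and all of them tend to $0$, so that $g^{\br{n+1}}$ vanishes identically and $\degg g\leq n$. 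You should replace your Phragm\'en--Lindel\"of step by this differentiation argument (or by invoking the regularized--determinant construction of Bunke--Olbrich, from which the order bound is immediate); as written, that step does not close.
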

\begin{proof}
Let $N\br{r}=\#\set{s\in\spec{D_{Y,\chi}\br{\sigma}}\,\,|\,\,\abs{s}\leq r}$.

Since $D_{Y,\chi}^{2}\br{\sigma}=A_{Y,\chi}^{2}\br{\gamma^{s},\sigma}$ and $A_{Y,\chi}^{2}\br{\gamma^{s},\sigma}$ is an elliptic operator of the second order, we have the estimate (see, \cite[p.~21]{Bunke2})

\[N\br{r}\sim Cr^{n},\]
as $r\rightarrow +\infty$.

Denote by $S$ the set of singularities of $f\br{s}$.

If $f\br{s}=Z_{S,\chi}\br{s,\sigma}$ (\textit{case} (\textit{a})) or $f\br{s}=S_{\chi}\br{s,\sigma}$ (\textit{case} (\textit{b})), then (see, \cite[p.~529, Eq. (4.7)]{AG})
\begin{equation}\label{third}
\sum\limits_{s\in S\backslash\set{0}}\abs{s}^{-\br{n+\varepsilon}}=O\br{1},
\end{equation}
for any $\varepsilon>0$.

Let $R_{1}$, $R_{2}$ denote respectively the sets consisting of the zeros, poles of $f\br{s}$. For simplicity, the point $s=0$ will be considered separately. Assume that $0\notin R_{i}$, $i=1,2$. Put $m_{0}=2m_{\chi}\br{0,\gamma,\sigma}$.

It follows from (\ref{third}) that
\begin{equation}\label{fourth}
\sum\limits_{s\in R_{i}}\abs{s}^{-\br{n+\varepsilon}}<\infty,
\end{equation}
for $i=1,2$ and for any $\varepsilon>0$.

Let $\rho_{1}^{i}$ resp. $p_{i}$ denote the convergence exponent resp. the genus of the set $R_{i}$ for $i=1,2$ (see, \cite[p.~14]{Boas}). By (\ref{fourth}), $\rho_{1}^{i}$, $p_{i}\leq n$ for $i=1,2$. Now, by \cite[p.~19, Th. 2.6.5.]{Boas}, $W_{i}\br{s}$ is an entire function of order $\rho_{1}^{i}$ over $\mathbb{C}$, where

\[W_{i}\br{s}=\prod\limits_{z\in R_{i}}E\br{\frac{s}{z},p_{i}},\]

\[E\br{u,k}=\br{1-u}\exp\br{u+\frac{u^{2}}{2}+...+\frac{u^{k}}{k}},\]
$i=1,2$.

We see that $f\br{s}W_{1}\br{s}^{-1}W_{2}\br{s}s^{-m_{0}}$ is an entire function and has no zeros over $\mathbb{C}$. Hence, (see, e.g., \cite{Conway}), there exists an analytic function $g\br{s}$ such that

\[f\br{s}W_{1}\br{s}^{-1}W_{2}\br{s}s^{-m_{0}}=e^{g\br{s}}\]
\newline
for $s\in\mathbb{C}$. By taking logarithms of both sides, we obtain

\[g\br{s}=\log f\br{s}+\log W_{2}\br{s}-\log W_{1}\br{s}-m_{0}\log s.\]
\newline
Differentiating $n+1$ times and having in mind that the logarithmic derivative of $f\br{s}$ is given by a Dirichlet series absolutely convergent for $\RE\br{s}\gg 0$ (see, \cite{Bunke}), we conclude that
\[\lim\limits_{\abs{s}\rightarrow +\infty}g^{\br{n+1}}\br{s}=0.\]
\newline
Therefore, the degree of $g\br{s}$ is at most $n$. Now, the assertion follows from the representation
\[f\br{s}=s^{m_{0}}e^{g\br{s}}\frac{W_{1}\br{s}}{W_{2}\br{s}}.\]
\newline
If $f\br{s}=S_{\chi}^{s}\br{s,\sigma}$  (\textit{case} (\textit{b})), then

\[\sum\limits_{s\in S\backslash\set{0}}\abs{s}^{-\br{n+\varepsilon}}=\sum_{\substack{s\in
S\backslash\set{0}\\
0<\abs{s}<1}}\abs{s}^{-\br{n+\varepsilon}}+\sum_{\substack{s\in
S\backslash\set{0}\\
\abs{s}\geq 1}}\abs{s}^{-\br{n+\varepsilon}}=\]

\[O\br{1}+\sum_{\substack{s\in
\spec{D_{Y,\chi}\br{\sigma}}\\
\abs{s}\geq 1}}\abs{m_{\chi}^{s}\br{s,\sigma}}\abs{s}^{-\br{n+\varepsilon}}=\]

\[O\br{\int\limits_{1}^{+\infty}t^{-\br{n+\varepsilon}}dN\br{t}}=O\br{1},\]
\newline
for any $\varepsilon>0$. By the same argumentation as in the previous case, the assertion follows.

Finally, if $f\br{s}=Z_{S,\chi}\br{s,\sigma}$ (\textit{case} (\textit{b})), the theorem follows from the fact that $Z_{S,\chi}\br{s,\sigma}=\sqrt{S_{\chi}\br{s,\sigma} S_{\chi}^{s}\br{s,\sigma}}$. This completes the proof.
\end{proof}\

\begin{cor}{4.2}\label{crll}
\textit{A meromorphic extension over $\mathbb{C}$ of the Ruelle zeta function\\ $Z_{R,\chi}\br{s,\sigma}$ can be expressed as}

\[Z_{R,\chi}\br{s,\sigma}=\frac{Z_{1}\br{s}}{Z_{2}\br{s}},\]
\newline
\textit{where $Z_{1}\br{s}$, $Z_{2}\br{s}$ are entire functions of order at most $n$ over $\mathbb{C}$.}
\end{cor}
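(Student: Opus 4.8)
The plan is to deduce this from Theorem \ref{new} by means of the Fried-type factorization \br{\ref{second}}, which already expresses the meromorphically continued Ruelle zeta function as a product of shifted Selberg zeta functions. First I would recall that \br{\ref{second}} reads
\[Z_{R,\chi}\br{s,\sigma}=\prod\limits_{p=0}^{n-1}\prod\limits_{\br{\tau,\lambda}\in I_{p}}Z_{S,\chi}\br{s+\rho-\lambda,\tau\otimes\sigma}^{\br{-1}^{p}},\]
and stress that this is a genuinely \emph{finite} product: the index $p$ runs over $0,1,\dots,n-1$, while each index set $I_{p}$ is finite because it records the $MA$-decomposition of the finite-dimensional space $\Lambda^{p}\mathfrak{n}_{\mathbb{C}}$.

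Next, for each pair $\br{\tau,\lambda}\in I_{p}$ I would apply Theorem \ref{new} to the Selberg factor $Z_{S,\chi}\br{\,\cdot\,,\tau\otimes\sigma}$, obtaining a representation as a quotient of two entire functions of order at most $n$; this is legitimate in both \textit{case} (\textit{a}) and \textit{case} (\textit{b}), since the theorem covers $Z_{S,\chi}$ in either situation. Because a translation $s\mapsto s+\rho-\lambda$ of the argument leaves the order of an entire function unchanged, the shifted factor $Z_{S,\chi}\br{s+\rho-\lambda,\tau\otimes\sigma}$ is again a quotient $Z_{1,\tau,\lambda}\br{s}/Z_{2,\tau,\lambda}\br{s}$ of entire functions of order at most $n$.

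Finally I would assemble $Z_{1}\br{s}$ and $Z_{2}\br{s}$ by sorting these factors according to the parity of $p$. For even $p$ the contribution $\br{Z_{1,\tau,\lambda}/Z_{2,\tau,\lambda}}^{+1}$ sends $Z_{1,\tau,\lambda}$ to the numerator and $Z_{2,\tau,\lambda}$ to the denominator, whereas for odd $p$ the two roles are interchanged. Each of $Z_{1}\br{s}$ and $Z_{2}\br{s}$ is then a finite product of entire functions of order at most $n$; since the order of a finite product of entire functions is at most the maximum of the orders of the factors, both $Z_{1}\br{s}$ and $Z_{2}\br{s}$ are entire of order at most $n$, which yields the asserted representation $Z_{R,\chi}\br{s,\sigma}=Z_{1}\br{s}/Z_{2}\br{s}$.

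The argument is essentially bookkeeping once Theorem \ref{new} is available, so I do not expect a genuine obstacle. The only points requiring care are the two standard facts that translation of the argument preserves the order of an entire function and that a finite product of entire functions of order at most $n$ remains of order at most $n$, together with the verification that the index sets $I_{p}$ are finite so that no infinite product is formed. One may also remark that cancellation between numerator and denominator is harmless here, as the statement only demands entire functions of order at most $n$ and not that $Z_{1}\br{s}$ and $Z_{2}\br{s}$ be coprime.
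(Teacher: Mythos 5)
Your proposal is correct and follows exactly the route of the paper, whose own proof is the one-line remark that the corollary is an immediate consequence of formula (\ref{second}) and Theorem \ref{new}; you have simply spelled out the bookkeeping (finiteness of the index sets $I_{p}$, invariance of order under translation, and order of a finite product) that the authors leave implicit.
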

\begin{proof}
An immediate consequence of the formula (\ref{second}) and Theorem \ref{new}.
\end{proof}\

\begin{remm}{4.3.}
An approach using the Selberg zeta function is not always sufficient to reach expected error terms in the prime geodesic theorem. As explained in \cite{Park}, this approach provide us only with the error terms corresponding to the poles of the logarithmic derivative of the Selberg zeta lying in the strip $n-2<\RE\br{s}\leq n-1$. In case $n>3$, the meromorphic continuation of the Ruelle zeta function yields more satisfactory results (see, \cite{AG0}, \cite{Park}, \cite{Randol1}).
\end{remm}

\end{document}